\numberwithin{equation}{section} 
\DeclareMathOperator{\gal}{Gal}
\DeclareMathOperator{\ord}{ord}
\DeclareMathOperator{\car}{char}
\DeclareMathOperator{\gl}{GL}
\DeclareMathOperator{\SL}{SL}
\DeclareMathOperator{\agl}{AGL}
\DeclareMathOperator{\id}{Id}
\newtheorem{thm}{Theorem}[section]
\newtheorem{lem}[thm]{Lemma}
\newtheorem{cor}[thm]{Corollary}
\newtheorem{defn}[thm]{Definition}
\begin{document}

\title{On the pre-image of a point under an isogeny and Siegel's theorem}
\author{Jonathan Reynolds}
\address{Mathematisch Instituut \\ Universiteit Utrecht \\ Postbus 80.010 \\ 3508 TA Utrecht \\ Nederland}
\email{J.M.Reynolds@uu.nl}
\date{\today}
\thanks{The author is supported by a Marie Curie Intra European Fellowship (PIEF-GA-2009-235210)}
\subjclass[2000]{11G05, 11A51}

\begin{abstract}  Consider a rational point on an elliptic curve under an isogeny. Suppose that the action of Galois partitions the set of its pre-images into $n$ orbits. It is shown that all such points above a certain height have their denominator divisible by at least $n$ distinct primes. This generalizes Siegel's theorem and more recent results of Everest et al. For multiplication by a prime $l$, it is shown that if $n>1$ then either the point is $l$ times a rational point or the elliptic curve emits a rational $l$-isogeny.                 
\end{abstract}

\maketitle


\section{Introduction}

Let $(E,O)$ denote an elliptic curve defined over a number field $K$ with Weierstrass coordinate functions $x,y$. Siegel \cite{Sie01} proved that there are only finitely many $P \in E(K)$ with $x(P)$ belonging to the ring of integers $\mathcal{O}_K$. Given a finite set $S$ of prime ideals of $\mathcal{O}_K$, the ring of $S$-integers in $K$ is
\[
\mathcal{O}_{KS} := \{ x \in K : \ord_{ \mathfrak{p}}(x) \ge 0 \textrm{ for all } \mathfrak{p} \notin S   \}.
\]   
Mahler \cite{Mahler} conjectured that there are finitely many $P \in E(K)$ with $x(P) \in \mathcal{O}_{KS}$ and proved his conjecture for $K= \mathbb{Q}$. Lang \cite{MR0130219} gave a modernized exposition and proved Mahler's conjecture 
for number fields. A corollary to this is that there are finitely many $P \in E(K)$ with $f(P) \in \mathcal{O}_{KS}$, where $f \in K(E)$ is any function having a pole at $O$  
(see Corollary 3.2.2 in Chapter IX of~\cite{MR2514094}). It is unknown how much further these $S$-integral points can be generalized before finiteness fails. For example, in \cite{MR2548983} it is suggested that, in rank one subgroups, only the size of $S$ has to be fixed and not the primes in the set.

Everest, Miller and Stephens \cite{MR2045409} proved conditionally for $K=\mathbb{Q}$ that there are finitely many multiples $mP$ of a non-torsion point $P$ which have the denominator of $x(mP)$ divisible by a single prime not belonging to a fixed set. These denominators generate an elliptic divisibility sequence, a genus-$1$ analogue of more classical sequences such as Fibonacci or Mersenne, and the condition, which they called \emph{magnified}, is that the non-torsion point $P$ has a preimage defined in a number field of degree less than the degree of the isogeny (see Definition \ref{1.1}). The finiteness result concerning primes in elliptic divisibility sequences was generalized to number fields under an extra assumption that the pre-image lie in a Galois extension \cite{MR2164113}. In what follows this extra assumption is removed, there is no restriction to rank one subgroups and, analogous to the results for integral points, $S$ and $f$ are arbitrary (see Theorem \ref{gali}). Moreover, using the division polynomials of $E$, the magnified condition is replaced with a factorization criterion which can be checked more readily (see Theorem \ref{step}). This leads to the first known proof that, as conjectured by Everest, the condition can fail for prime degrees. In particular,
either the magnified point is $l$ times a rational point or the elliptic curve emits a rational $l$-isogeny for some prime $l$ (see Theorem \ref{1.2} and Corollary \ref{1.3}).                

\subsection{Division Polynomials} Let $E$ be an elliptic curve defined over a field $K$ with Weierstrass coordinate functions $x,y$. For any integer $m \in \mathbb{Z}$, the $m$th division polynomial of $E$ is the polynomial 
$\psi_m \in K[x,y] \subset K(E)$ as given on p. 39 of \cite{MR1771549}.
Moreover, $\psi_m^2 \in K[x]$ and there exists $\theta_m \in K[x]$ such that
\[
[m]x=\frac{\theta_m}{\psi_m^2}.
\]
Given $P \in E(K)$, define $\delta_m^P \in K[x]$ by
\[
\delta_m^P= \left\{ \begin{array}{cc} \theta_m-x(P)\psi_m^2 & \textrm{if } P \ne O \\ \psi_m^2 & \textrm{otherwise.} \end{array} \right.
\]

The zeros of $\delta_m^P$ give the values of $x(R)$ for which $mR=P$.

\begin{thm} \label{gali} Let $K$ be a number field, $S$ a finite set of prime ideals of $\mathcal{O}_K$ and $f \in K(E)$ a function having a pole at $O$. Suppose that $\delta_m^P$ has $n$ factors over $K$ for some $P \in E(K)$. Then for all such points of sufficiently large height,
\begin{equation} \label{pri}
\{ \textrm{primes } \mathfrak{p} \notin S : \ord_{\mathfrak{p}}(f(P))<0 \}
\end{equation}
contains at least $n$ distinct primes.
\end{thm}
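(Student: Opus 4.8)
The plan is to translate the factorization hypothesis into a statement about the reduction of $P$ at primes outside $S$, and then to extract one new prime from each factor of $\delta_m^P$. First I would record the dictionary between the factorization and the pre-images: writing $\delta_m^P = \prod_{i=1}^n g_i$ with each $g_i \in K[x]$, the roots of $g_i$ are the $x$-coordinates of one $\gal(\bar K/K)$-orbit of points $R_i$ satisfying $[m]R_i = P$. Fixing such an $R_i$ and the field $L_i = K(R_i)$, functoriality of the canonical height under $[m]$ gives $\hat h(R_i) = m^{-2}\hat h(P) + O(1)$, with implied constant depending only on $E$; hence $h(P)$ large forces every $h(R_i)$ large.

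Next I would reduce the conclusion about $f$ to a statement about reduction to $O$. Since $f$ has a pole at $O$, there is a finite set $T$ of primes (the primes of bad reduction, the primes dividing $m$, and the primes at which a pole of $f$ other than $O$ can be met) such that for $\mathfrak p \notin S \cup T$ one has $\ord_{\mathfrak p}(f(P)) < 0$ exactly when $\tilde P = \tilde O$ in the reduction $\tilde E$. Enlarging $S$ by the finitely many primes of $T$ alters the count only boundedly, and I would absorb this at the end using that the conclusion is claimed only for large height. The key local observation is then: for $\mathfrak p \notin S \cup T$ and a prime $\mathfrak q$ of $L_i$ above $\mathfrak p$, if $R_i$ lies in the kernel of reduction at $\mathfrak q$ then $P = [m]R_i$ lies in the kernel of reduction at $\mathfrak p$, so $\mathfrak p$ belongs to the set \eqref{pri}. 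Thus a prime at which some orbit representative degenerates to $O$ is automatically a prime in the denominator of $f(P)$.

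The substance of the argument is then to produce, for each factor, such a degeneration prime and to guarantee that the $n$ primes so produced are distinct. For existence I would invoke Siegel's theorem in the strong form underlying the integral points corollary (Corollary 3.2.2 in Chapter IX of~\cite{MR2514094}) applied to $R_i$: a point of large height cannot be everywhere integral, so some prime $\mathfrak q_i$ of $L_i$ outside the primes above $S \cup T$ places $R_i$ in the kernel of reduction, and $\mathfrak p_i := \mathfrak q_i \cap \mathcal O_K$ then lies in \eqref{pri}. For distinctness I would use that at a good prime $\mathfrak p \nmid m$ the reduction map is injective on $E[m]$, so at each prime of $\bar K$ above $\mathfrak p$ exactly one pre-image of $P$ reduces to $O$; counting the degenerating pre-images, weighted by the orbit they belong to and by the primes of $\bar K$ over a given $\mathfrak p$, I would show that $n$ orbits cannot all degenerate over fewer than $n$ rational primes without forcing two distinct orbits to collapse, which is impossible once $h(P)$ is large.

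The main obstacle I anticipate is uniformity: the fields $L_i$ vary with $P$, whereas Siegel's theorem is a statement over a fixed number field, and there is no uniform finiteness of integral points across all extensions of bounded degree. The plan to overcome this is to phrase the Siegel input not as finiteness of integral points in $L_i$ but as the quantitative statement that the local height $\lambda_{\mathfrak p}(P)$ is $o(h(P))$ uniformly in $\mathfrak p$, which follows from Siegel's theorem for $P \in E(K)$ and so lives entirely in the fixed field $K$; combined with the comparison $\hat h(R_i) = m^{-2}\hat h(P) + O(1)$ and the quasi-parallelogram law for local heights, this prevents the denominator of $f(P)$ from concentrating on fewer than $n$ primes once $h(P)$ is large. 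Carrying out this comparison so that both existence and distinctness are governed by a single height threshold independent of $P$ is where the real work lies.
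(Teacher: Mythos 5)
You have correctly isolated the crux---Siegel's theorem must be applied to the pre-images $R_i$, which live in fields $L_i=K(R_i)$ that vary with $P$---but your proposed repair does not close the gap. The statement that $\lambda_{\mathfrak{p}}(P)=o(h(P))$ \emph{uniformly in $\mathfrak{p}$} does not follow from Siegel's theorem: the quantitative form of Siegel controls the local height at each \emph{fixed} place (or fixed finite set of places), while the sum over all places is $h(P)+O(1)$. A bound that is uniform over all finite places would say that the denominator of $x(P)$ can never be asymptotically concentrated at a single prime, i.e.\ that the associated elliptic divisibility sequence has only finitely many prime-power terms; that is precisely the kind of statement which is open for general points and which results of this type establish only \emph{under} the factorization (magnified) hypothesis. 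So, as written, your existence step is circular. Your distinctness argument (injectivity of reduction on $[m]^{-1}P$ at good primes not dividing $m$, together with Galois conjugation of the primes above a given $\mathfrak{p}$) is sound and agrees with the paper's.

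The missing idea is the paper's resolution of the uniformity obstacle: all the fields $K(R)$, as $P$ ranges over $E(K)$, embed into a \emph{single} fixed finite extension $L/K$. By weak Mordell--Weil one may write $P=mQ+P_j$ with $Q\in E(K)$ and $P_j$ from a fixed set of representatives of $E(K)/mE(K)$; then $[m]^{-1}P=Q+[m]^{-1}P_j$, so adjoining to $K$ the coordinates of $[m]^{-1}P_j$ for the finitely many $P_j$ produces an $L$, independent of $P$, containing every pre-image of every $P$. One then applies the Siegel--Mahler theorem once, over this fixed $L$ and a fixed set $U$ of primes, to conclude that for $P$ of sufficiently large height no $R\in[m]^{-1}P$ is $U$-integral. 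Writing $x(R)=A_R/B_R^2$, the identity $x(P)=\theta_m(x(R))/\psi_m^2(x(R))$ shows that $B_R$ divides the denominator of $x(P)$ and is coprime to its numerator; the addition formula shows $B_R$ and $B_{R'}$ are coprime for distinct pre-images $R\neq R'$; and the Galois action then yields one prime of $K$ per orbit, hence per factor of $\delta_m^P$. If you replace your local-height step with this descent to a fixed field, the remainder of your outline (including the reduction from general $f$ to $f=x$ via the pole at $O$) goes through essentially as in the paper.
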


By Siegel's theorem, along with the generalizations of it by Mahler and Lang, (\ref{pri}) contains at least one prime for all $P \in E(K)$ of sufficiently large height. So Theorem~\ref{gali} extends Siegel's result whenever $\delta_m^P$ factorizes for some non-torsion point $P$.  In Section \ref{proof1} it shown that the points not of sufficiently large height are $m$ times a $U$-integral point for some finite set $U$ of prime ideals of $\mathcal{O}_L$, where $U$ and $L$ are given explicitly. Quantitative results for the number of such points can be found using \cite{MR1328329}.

In addition to being conjectured finite \cite{MR2164113, MR2045409, MR2365225}, the number of prime terms in an elliptic divisibility sequence coming from a minimal Weierstrass equation is believed to be uniformly bounded \cite{MR2429645, Mah}. Similarly, the number of terms without a primitive divisor is believed to be uniformly bounded \cite{MR2486632, MR2301226, IngrSilv}. There are also links between primitive divisors and extensions of Hilbert's tenth problem \cite{MR2377127, MR2480276}. However, most results in these directions have also used that $\delta_m^P$ factorizes for some $m$. Therefore it seems reasonable to give a detailed study of this condition.    

Let $K$ be a number field, $E/K$ an elliptic curve and suppose that $\delta_m^P$ factorizes for some non-torsion point $P \in E(K)$. Then, since $\delta_m^P$ is monic and has degree $m^2$, $mR=P$ with $[K(R):K] \le m^2/2$. Assuming Lehmer's conjecture (see \cite{MR2029512}), there exists a constant $c$ depending only on $E$ and $K$ so that $\hat{h}(P)=m^2\hat{h}(R)>2c$. However, since the constant is unknown, Lehmer's conjecture gives no way of knowing if the condition will fail for a given point or not. For prime degrees this issue is resolved by the following

\begin{thm} \label{1.2}
Let $l$ be a prime, $E$ an elliptic curve defined over a field $K$ with $\car{K} \nmid l$ and $P$ a $K$-rational point on $E$. Then either 
\begin{enumerate}[i.]
\item $\delta_l^P$ is irreducible, or
\item $E$ emits a $K$-rational $l$-isogeny, or
\item $[l]^{-1}P$ contains a $K$-rational point.
\end{enumerate}  
\end{thm}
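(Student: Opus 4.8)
The plan is to translate the factorization of $\delta_l^P$ into a statement about the action of $G=\gal(\bar K/K)$ on the set $X=[l]^{-1}(P)$ of pre-images of $P$. Since $\car K\nmid l$, the map $[l]$ is separable of degree $l^2$, so $X$ has exactly $l^2$ distinct points; when $P$ is not $2$-torsion these have distinct $x$-coordinates and hence, being the roots of the separable polynomial $\delta_l^P$, the irreducible factors of $\delta_l^P$ over $K$ correspond bijectively to the $G$-orbits on $X$ (the case $2P=O$ can be treated separately). Fixing $Q_0\in X$ identifies $X$ with the torsor $E[l]\cong\mathbb{F}_l^2$, and since $[l]Q_0=P\in E(K)$ the action is affine: $\sigma\cdot(Q_0+T)=Q_0+c(\sigma)+\rho(\sigma)T$, where $\rho\colon G\to\aut(E[l])=\gl_2(\mathbb{F}_l)$ is the mod-$l$ representation and $c(\sigma)=\sigma(Q_0)-Q_0\in E[l]$ is a cocycle. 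Writing $\bar G\le\agl_2(\mathbb{F}_l)$ for the image, with linear part $H=\rho(G)$ and translation part $N=\bar G\cap E[l]$ (an $H$-submodule of $E[l]$), the three alternatives become group-theoretic: $\delta_l^P$ is irreducible iff $\bar G$ is transitive on $X$; $E$ emits a $K$-rational $l$-isogeny iff $H$ stabilizes a line in $E[l]$; and $[l]^{-1}P$ contains a $K$-rational point iff $\bar G$ fixes a point of $X$, i.e. iff $c$ is a coboundary.

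The key elementary observation is that $|\gl_2(\mathbb{F}_l)|=l(l-1)^2(l+1)$ is divisible by $l$ but not by $l^2$. Hence if $\bar G$ is transitive then $l^2\mid|\bar G|=|N|\,|H|$ forces $l\mid|N|$, i.e. $\dim_{\mathbb{F}_l}N\ge 1$. I would then split on $\dim N$: if $\dim N=2$ then $N=E[l]$, so $\bar G$ contains all translations and is transitive, which is alternative (i); if $\dim N=1$ then $N$ is an $H$-stable line, which is alternative (ii). This reduces everything to the case $N=0$, where $\rho$ identifies $\bar G$ with $H$ and the action is exactly the affine action attached to the cocycle $c$.

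In the case $N=0$, if $H$ is reducible it stabilizes a line and (ii) holds, so I may assume $H$ acts irreducibly on $E[l]$; the goal is then to show that $c$ is a coboundary, giving (iii). If $l\nmid|H|$ then $H^1(H,E[l])=0$ by Maschke's theorem (the group order is invertible modulo $l$), so $c$ is a coboundary. If $l\mid|H|$, then by Dickson's classification an irreducible subgroup of $\gl_2(\mathbb{F}_l)$ of order divisible by $l$ contains $\SL_2(\mathbb{F}_l)$; for $l\ge 5$ one has $H^1(\SL_2(\mathbb{F}_l),E[l])=0$, and an inflation--restriction argument over the prime-to-$l$ group $H/\SL_2(\mathbb{F}_l)$ transfers this vanishing to $H$, again making $c$ a coboundary.

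The hard part will be the small primes $l\in\{2,3\}$, where the natural module can have non-vanishing cohomology (for instance $H^1(\SL_2(\mathbb{F}_3),\mathbb{F}_3^2)\neq 0$), so the cohomological argument does not by itself produce a rational pre-image. Here I would argue more carefully: since $\det\rho$ is the mod-$l$ cyclotomic character, the image can equal the bare group $\SL_2(\mathbb{F}_l)$ only when the relevant roots of unity lie in $K$, and the scalar action of $H/\SL_2(\mathbb{F}_l)$ on $H^1$ typically kills the offending class; the residual configurations, together with the $2$-torsion case set aside at the start, form a short finite list that can be settled by direct inspection of $\delta_l^P$. I expect confirming that each of these small-prime configurations genuinely falls under (ii) or (iii) to be the most delicate step.
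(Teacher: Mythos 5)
Your architecture coincides with the paper's proof in Section~\ref{prime}: the factors of $\delta_l^P$ correspond to Galois orbits on $[l]^{-1}P$ once $P\notin E[2]$; the translation subgroup $N$ (the paper's $\ker\alpha_l$) is stable under the linear part, so $N=E[l]$ gives (i) and $\dim N=1$ gives (ii); and when $N=0$ the action is encoded by a crossed homomorphism on $H=G_l$, which is principal either because $l\nmid\#H$, or, after the Borel-versus-$\SL_2$ dichotomy (the paper invokes Proposition~15 of Serre's article rather than Dickson's classification), because $H^1(\SL_2(\mathbb{Z}/l\mathbb{Z}),E[l])=0$ propagates to $H$ by inflation--restriction using $E[l]^{\SL_2}=0$. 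Up to that point your argument is sound and tracks the paper step for step.

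The gap is your last paragraph: you never actually prove the theorem for $l\in\{2,3\}$ (nor do you carry out the $2$-torsion case you set aside at the start) --- ``a short finite list settled by direct inspection'' and ``typically kills the offending class'' are not arguments. Moreover, the claim motivating this detour is false: $H^1(\SL_2(\mathbb{F}_3),\mathbb{F}_3^2)=0$. For any odd $l$ the central element $-I\in\SL_2(\mathbb{F}_l)$ acts trivially on cohomology (inner automorphisms act trivially) but by $-1$ on the module, so $2$ annihilates $H^1(\SL_2(\mathbb{F}_l),E[l])$, which is an $l$-group; hence the vanishing holds for every odd $l$ and $l=3$ needs no special treatment --- this is precisely why the paper's cited vanishing lemma covers all $l>2$ uniformly. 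The only prime genuinely requiring separate care is $l=2$ (where $-I=I$, and the triviality of $E[2]^{H_2}$ and the Borel/$\SL_2$ dichotomy must be rechecked by hand). The paper sidesteps $l=2$ entirely with the elementary Lemma~\ref{m=2}: if $2R=P$ generates a quadratic extension with conjugation $\sigma$, then $T=\sigma(R)-R$ is a $K$-rational $2$-torsion point, the quotient isogeny with kernel $\{O,T\}$ is $K$-rational, and the image of $R$ under it is a $K$-rational point mapping to $P$ under the dual isogeny. Supply either that argument or the (true, but unverified in your write-up) facts needed to run the cohomological machine at $l=2$, plus the easy disposal of $P\in E[2]$, and your proof closes; as written it does not.
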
 

Given an elliptic curve $E/ \mathbb{Q}$, the set of all curves $E'$ isogenous to $E$ over $\mathbb{Q}$ is finite (up to 
isomorphism) and is known as an isogeny class. V\'{e}lu's formulae \cite{MR0294345} and the Weierstrass parameterization of
the elliptic curve can be used to find an isogeny class. This is best illustrated in an algorithm developed by Cremona 
\cite{MR1628193}. He has used his algorithm to produce tables of isogeny classes \cite{Credat}. For each curve in the class, 
non-torsion generators of the Mordell-Weil group are also given. 
For a number field the primes which can occur as orders of isogenies have been well studied \cite{MR931181}. Applying a famous result of Mazur \cite{MR482230} gives 

\begin{cor} \label{1.3}
Let $K=\mathbb{Q}$ and $P \in E(\mathbb{Q})$. If $\delta_l^P$ factorizes for some prime $l$ then either $P$ is $l$ times a rational point, or $l \le 19$, or l=$37$, $43$, $67$, or $163$.
\end{cor}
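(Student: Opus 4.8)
The plan is to deduce this directly from Theorem~\ref{1.2} together with Mazur's classification of rational isogenies of prime degree. Since $\car \mathbb{Q}=0$, the hypothesis $\car K \nmid l$ of Theorem~\ref{1.2} holds automatically for every prime $l$, so I would begin by applying that theorem to $E/\mathbb{Q}$ and $P\in E(\mathbb{Q})$. The assumption that $\delta_l^P$ factorizes is precisely the negation of alternative (i), and hence one of the two remaining alternatives must hold.

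First I would dispose of alternative (iii). If $[l]^{-1}P$ contains a $\mathbb{Q}$-rational point $R$, then $lR=P$ with $R\in E(\mathbb{Q})$, which is exactly the statement that $P$ is $l$ times a rational point. This gives the first conclusion of the corollary and requires no further work.

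The substance lies in alternative (ii). If $E$ emits a $\mathbb{Q}$-rational $l$-isogeny, then I would invoke the theorem of Mazur~\cite{MR482230}, which asserts that a prime can arise as the degree of a rational isogeny of an elliptic curve over $\mathbb{Q}$ only when $l\in\{2,3,5,7,11,13,17,19,37,43,67,163\}$; equivalently $l\le 19$ or $l\in\{37,43,67,163\}$. This yields the remaining conclusions. One small point to verify is that, for prime $l$, ``emits a rational $l$-isogeny'' coincides with the existence of a rational cyclic subgroup of order $l$, the setting of Mazur's theorem; this is immediate, since any group of prime order is cyclic.

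The main obstacle is not in the deduction itself, which is a short case split on the trichotomy of Theorem~\ref{1.2}, but is entirely absorbed into the two inputs: Theorem~\ref{1.2} and the depth of Mazur's result on the rational points of $X_0(l)$. Once these are granted, the corollary follows formally, and the only care needed is to match the hypotheses precisely and to translate alternative (iii) into the phrase ``$l$ times a rational point.''
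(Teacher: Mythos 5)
Your proposal is correct and matches the paper's (implicit) argument exactly: the paper derives Corollary~\ref{1.3} by combining Theorem~\ref{1.2} with Mazur's determination of the primes occurring as degrees of rational isogenies over $\mathbb{Q}$, ruling out alternative (i) by hypothesis and translating alternatives (ii) and (iii) just as you do.
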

The criterion in Corollary \ref{1.3} can readily be checked using, for example, $\mathtt{MAGMA}$ \cite{MR1484478} and so gives a way to determine if $\delta_l^P$ is irreducible for all primes $l$. What is known for composite $m$ is discussed in Section 5; note that if $\delta_m^P$ factorizes then $\delta_d^P$ does not necessarily factorize for some proper divisor $d>1$ of $m$, but counter-examples have only been found when $m=4$.
 
\section{The action of Galois on preimages} \label{action}

Let $E$ be an elliptic curve defined over a field $K$ with Weierstrass coordinate functions $x,y$. Given a Galois extension $L/K$, $\sigma \in \gal(L/K)$ and $R \in E(L)$, $\sigma(R)$ is defined by $\sigma(R)=(x(R)^{\sigma}, y(R)^{\sigma})$.  

\begin{defn}[\cite{MR2164113}] \label{1.1}
Let $K$ be a field, $E/K$ an elliptic curve, $P \in E(K)$ and $\phi:E' \to E$ an isogeny. Suppose that $E'$, $\phi$ and a point in $\phi^{-1}(P)$ are all defined over a finite extension $L/K$. If $[L:K]<\deg \phi$ then $P$ is called 
\emph{magnified}.
\end{defn}

Below (Theorem \ref{step}) it is shown that for a perfect field (which includes the applications referenced above) the magnified condition is equivalent to $\delta_m^P$ factorizing for some $m$.

\begin{lem} \label{2.1}
Assume that $\car{K} \ne 2$ or $K$ is perfect. Suppose that $P \in E(K)$ is not a $2$-torsion point, $E'/K$ is an elliptic curve with Weierstrass coordinate functions $x',y'$ and $\phi: E' \to E$ is an isogeny defined over $K$ with $\phi(R)=P$. Then $K(x'(R), y'(R))=K(x'(R))$. 
\end{lem}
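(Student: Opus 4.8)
The plan is to reduce the claim to showing $y'(R) \in F$, where $F := K(x'(R))$, and then to rule out the only alternative by a short Galois argument. Since $y'(R)$ satisfies the Weierstrass equation of $E'$ over $F$, which is monic and quadratic in $y'$, the extension $K(x'(R),y'(R))/F$ has degree at most $2$; so everything comes down to deciding whether this degree is $1$ or $2$. First I would record that $R$ is not a $2$-torsion point: otherwise $R=-R$ would give $P=\phi(R)=\phi(-R)=-P$, contradicting the hypothesis that $P$ is not $2$-torsion. In particular $R$ and $-R$ are distinct, and they are the only two points of $E'$ with $x'$-coordinate equal to $x'(R)$.

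Next I would invoke the hypothesis to guarantee separability. The fibre $\phi^{-1}(P)$ is finite, so $x'(R)$ is algebraic over $K$ and $F/K$ is a finite extension. If $\car K \neq 2$, then any extension of degree at most $2$ is separable; if instead $K$ is perfect, then $F$ is perfect too (being finite over $K$), so again every algebraic extension of $F$ is separable. Either way $K(x'(R),y'(R))/F$ is a separable extension of degree at most $2$, hence Galois.

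Suppose for contradiction that this degree is $2$, and let $\sigma$ generate $\gal(K(x'(R),y'(R))/F)$. Then $\sigma$ fixes $x'(R)$ but moves $y'(R)$, so it sends $R$ to a point with the same $x'$-coordinate other than $R$ itself, forcing $\sigma(R)=-R$. Because $\phi$ is defined over $K$ and $\sigma$ fixes $K$, it commutes with $\phi$, giving $\sigma(P)=\sigma(\phi(R))=\phi(\sigma(R))=\phi(-R)=-P$. On the other hand $P \in E(K)$ is fixed by $\sigma$, so $\sigma(P)=P$ and hence $P=-P$, again contradicting that $P$ is not $2$-torsion. Therefore the degree is $1$, that is $y'(R)\in F$, which is exactly the assertion $K(x'(R),y'(R))=K(x'(R))$.

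The main obstacle is the separability issue hidden in characteristic $2$: the automorphism argument only sees separable extensions, so one must ensure that $y'(R)$ does not generate a purely inseparable quadratic extension of $F$. This is precisely the role of the hypothesis ``$\car K \ne 2$ or $K$ perfect''. In fact one can check that the non-$2$-torsion of $R$ already forces the relevant quadratic $T^2+(a_1'x'(R)+a_3')T-\cdots$ over $F$ to have nonzero linear coefficient, hence to be separable even in characteristic $2$, so the perfectness assumption functions here as a convenient safety net rather than a strict necessity.
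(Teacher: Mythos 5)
Your proof is correct and follows essentially the same route as the paper: bound $[K(x'(R),y'(R)):K(x'(R))]$ by $2$, use the hypothesis to make the extension Galois, and show a nontrivial $\sigma$ would force $\sigma(R)=-R$ and hence $P=-P$, contradicting that $P\notin E[2]$ (the paper phrases this via $T=\sigma(R)-R\in\ker\phi$ and $R+T=\pm R$, but it is the same argument). Your closing observation that non-$2$-torsion of $R$ already makes the relevant quadratic separable in characteristic $2$ is a correct bonus not present in the paper.
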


\begin{proof}
Put $L=K(x'(R))$ and $L'=K(x'(R), y'(R))$. Then $[L':L] \le 2$. The assumptions on $K$ make $L'/L$ Galois. 
Suppose that $[L':L]=2$ and choose $\sigma$ to be the generator of $\gal(L'/L)$. Then $T=\sigma(R)-R$ is in the kernel of 
$\phi$ since $\sigma(\phi(R))-\phi(R)= O$. But $\sigma$ fixes $x'(R)$ so $R+T=\pm R$. Since $P$ is not a 
$2$-torsion point it follows that $\sigma(R)=R$ and $L'=L$.
\end{proof}        

\begin{lem} \label{2.2}
Assume that $K$ is perfect. If $P \in E(K) \setminus E[2]$ is magnified by an isogeny $\phi : E' \to E$ of degree $m$ then it is magnified by $[m]$.
\end{lem}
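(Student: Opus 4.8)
The plan is to promote the isogeny $\phi$ to the multiplication map $[m]$ by means of its dual. Write $\hat\phi : E \to E'$ for the dual of $\phi$; it again has degree $m$, is defined over exactly the same field as $\phi$, and satisfies $\phi \circ \hat\phi = [m]_E$. This identity is the whole point: it realizes $[m]$ as a composite that factors through $\phi$, so a preimage of $P$ under $\phi$ can be pulled back one further step along $\hat\phi$ to produce a preimage of $P$ under $[m]$. Since $\deg[m]=m^2$, one then only has to check that this new preimage is defined over a field of degree strictly less than $m^2$.

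Concretely, I would first unpack Definition \ref{1.1}: the magnified hypothesis furnishes a finite extension $L/K$ with $[L:K]<m$ over which $E'$, $\phi$ and some $R\in\phi^{-1}(P)$ are all defined. Because the dual is defined over the same field, $\hat\phi$ is defined over $L$ and $R\in E'(L)$. I then pick any $Q$ in the fibre $\hat\phi^{-1}(R)$, which is nonempty since isogenies are surjective on $\bar K$-points, and note that
\[
[m]Q=\phi\bigl(\hat\phi(Q)\bigr)=\phi(R)=P,
\]
so $Q$ is a bona fide preimage of $P$ under $[m]$. It remains to control $[L(Q):K]$. Here perfectness of $K$ (hence of $L$) is what makes $L(Q)/L$ separable, so that $[L(Q):L]$ equals the size of the $\gal(\bar L/L)$-orbit of $Q$. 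Every conjugate $\sigma Q$ again lies in $\hat\phi^{-1}(R)$, as $\hat\phi$ and $R$ are defined over $L$; and that fibre is a torsor under $\ker\hat\phi$, whose size is the separable degree of $\hat\phi$, at most $\deg\hat\phi=m$. Thus $[L(Q):L]\le m$, and combining with $[L:K]\le m-1$ gives $[L(Q):K]\le m(m-1)<m^2=\deg[m]$, whence $P$ is magnified by $[m]$. As an alternative to the orbit count, one may invoke Lemma \ref{2.1} for $\hat\phi$ and $Q$ over $L$: this is exactly where the hypothesis $P\notin E[2]$ enters, since $\phi(R)=P\notin E[2]$ forces $R\notin E'[2]$ (a homomorphism carries $2$-torsion to $2$-torsion), and Lemma \ref{2.1} then lets one replace $L(Q)$ by $L(x(Q))$ before bounding the degree via the degree-$m$ $x$-coordinate map of $\hat\phi$.

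I expect the only genuinely delicate point to be the separability bookkeeping in the degree estimate: a priori one must rule out that solving $\hat\phi(Q)=R$ inflates the field of definition by more than a factor of $m$, and it is precisely perfectness that forbids inseparable contributions, since adjoining $p$-th roots does not enlarge a perfect field. Once this is granted, the surviving strict inequality $[L:K]<m$ propagates to $[L(Q):K]<m^2$ automatically, because all the degrees involved are integers.
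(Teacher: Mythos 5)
Your argument is correct and follows the paper's route in all essentials: pass to the dual $\hat\phi$ (defined over the same field $L$ with $[L:K]<m$), note that any $Q\in\hat\phi^{-1}(R)$ satisfies $[m]Q=\phi(\hat\phi(Q))=\phi(R)=P$, and then show $[L(Q):K]\le m(m-1)<m^2=\deg[m]$. The one place you diverge is the justification of $[L(Q):L]\le m$: the paper first invokes Lemma~\ref{2.1} (this is where $P\notin E[2]$ enters) to reduce $L(Q)$ to $L(x(Q))$, and then bounds that degree using the even function $x'\circ\hat\phi\in L(x)$, whose value at $x(Q)$ gives a polynomial over $L$ with no irreducible factor of degree exceeding $m$; you instead count the $\gal(\bar L/L)$-orbit of $Q$ directly inside the fibre $\hat\phi^{-1}(R)$, which has at most $\deg\hat\phi=m$ points, using perfectness only to identify $[L(Q):L]$ with the orbit size. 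Your variant is slightly cleaner and, as stated, does not actually use the hypothesis $P\notin E[2]$ at all --- that hypothesis is needed only for the paper's detour through Lemma~\ref{2.1}, which you correctly flag as an alternative (noting that $R\notin E'[2]$ follows from $\phi(R)=P\notin E[2]$). Both versions are sound; yours trades the explicit polynomial for a Galois-orbit count.
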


\begin{proof} Suppose that $E'$, $\phi$ and $Q \in \phi^{-1}(P)$ are all defined over a finite extension $L/K$ with 
$[L:K]<m$. The dual $\hat{\phi}: E \to  E'$ of $\phi$ is defined over $L$. Let 
$R \in \hat{\phi}^{-1}(Q)$. Lemma \ref{2.1} gives $L(x(R),y(R))=L(x(R))$. Now $f=x'\circ \hat{\phi} \in L(E)=L(x,y)$ is an
even function. Hence, $f \in L(x)$ and $f(x)=x'(Q)$ gives a polynomial in $L[x]$ whose roots determine the values of 
$x(R)$. Since $ \# \hat{\phi}^{-1}(Q) \le \deg{\hat{\phi}}=m$ and $K$ is perfect, this polynomial cannot have an irreducible factor of degree larger than $m$. Thus, $[L(x(R)):K]=[L(x(R)):L][L:K]<m^2$.   \end{proof}

\begin{thm} \label{step}
For $K$ a perfect field and an elliptic curve $E/K$, $P \in E(K)$ 
is magnified if and only if $\delta_m^P$ factorizes over $K$ for some $m$. 
\end{thm}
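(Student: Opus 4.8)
The plan is to prove the two implications separately, in each case setting up a dictionary between proper factors of the monic degree-$m^2$ polynomial $\delta_m^P$ and preimages of $P$ under $[m]$ that are defined over an extension of $K$ of degree less than $m^2=\deg[m]$. The crucial reduction is Lemma~\ref{2.2}: although Definition~\ref{1.1} permits an arbitrary isogeny $\phi$ of some degree $m$, Lemma~\ref{2.2} lets me replace $\phi$ by the multiplication map $[m]$, whose preimages of $P$ are exactly what the zeros of $\delta_m^P$ record. Throughout I would assume $P\notin E[2]$, since this is the regime in which both preparatory lemmas apply.

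First I would show that a magnified $P$ forces some $\delta_m^P$ to factorize. Suppose $P$ is magnified by an isogeny of degree $m$. By Lemma~\ref{2.2} it is then magnified by $[m]$, so there is a point $R$ with $[m]R=P$ defined over a field $L$ with $[L:K]<m^2$; in particular $x(R)\in L$, so $[K(x(R)):K]\le[L:K]<m^2$. Since $x(R)$ is a zero of $\delta_m^P$, which is monic of degree $m^2$, the minimal polynomial of $x(R)$ over $K$ is a proper divisor of $\delta_m^P$, whence $\delta_m^P$ is reducible over $K$.

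For the converse, suppose $\delta_m^P$ factorizes over $K$ for some $m$. Then it admits an irreducible factor $g\in K[x]$ with $\deg g<m^2$; let $\alpha$ be a root of $g$. Because the zeros of $\delta_m^P$ are precisely the $x$-coordinates of the points in $[m]^{-1}(P)$, we have $\alpha=x(R)$ for some $R$ with $[m]R=P$, and $[K(x(R)):K]=\deg g<m^2$. Applying Lemma~\ref{2.1} with $E'=E$ and $\phi=[m]$ yields $K(x(R),y(R))=K(x(R))$, so $R$ itself is defined over $L=K(x(R))$, an extension of degree $<m^2=\deg[m]$. Taking this $L$ in Definition~\ref{1.1} exhibits $P$ as magnified by $[m]$, completing the equivalence.

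The step I expect to be most delicate is the treatment of $2$-torsion points, which is exactly the case excluded from Lemmas~\ref{2.1} and~\ref{2.2}. When $P\in E[2]$ the conclusion $\sigma(R)=R$ can fail even though $\sigma$ fixes $x(R)$, so a preimage's $x$-coordinate no longer controls its full field of definition and the degree bookkeeping used above breaks down; resolving it seems to require a direct analysis of $[m]^{-1}(P)$. I would therefore either dispatch this case separately or, following the framing of the preparatory lemmas and the intended applications (in which $P$ is non-torsion), read the asserted equivalence as the statement for $P\notin E[2]$.
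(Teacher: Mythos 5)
Your proof is correct and follows the paper's argument exactly: reduce to multiplication by $[m]$ via Lemma~\ref{2.2}, then use Lemma~\ref{2.1} to identify the degree of a preimage's field of definition with the degree of an irreducible factor of the monic degree-$m^2$ polynomial $\delta_m^P$. The one case you leave open, $P\in E[2]$, the paper dispatches in a single line: then $3P=P$, so $P$ is its own $K$-rational preimage under $[3]$ and $x(P)$ is a $K$-rational root of $\delta_3^P$, whence both sides of the equivalence hold trivially and no weakening of the statement is needed.
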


\begin{proof} 
If $P \in E[2]$ then $3P=P$ so $\delta_3^P$ factorizes. So assume that $P \notin E[2]$. By Lemma \ref{2.2}, $P$ is magnified if and only if it is magnified by $[m]$ for some $m>1$. The result now follows from Lemma \ref{2.1}.
\end{proof}

\section{Proof of Theorem \ref{gali}} \label{proof1}

\begin{proof}[Proof of Theorem \ref{gali}] Suppose firstly that $f$ is a Weierstrass $x$-coordinate function. Fix a set of generators of $E(K)/mE(K)$ and for every $P_j$ in the set, adjoin to $K$ the coordinates of the points in $[m]^{-1}P_j$. Note that this finite extension $L$ does not depend on $P$ and that the splitting field of $\delta_m^P$ is contained within it. Let $U$ be a finite set of prime ideals of $\mathcal{O}_L$ containing
\begin{itemize}
\item those which lie above the ideals in $S$,
\item those which the coefficients of the Weierstrass equation $U$-integers,
\item those which make $x(T)$ a $U$-integer for all non-zero $T \in E[m]$, and  
\item those which make $\mathcal{O}_{LU}$ a principal ideal domain.
\end{itemize}
By the Siegel-Mahler theorem we can assume that no $R \in [m]^{-1}P$ is $U$-integral. Write $x(R)=A_R/B_R^2$, where $A_R$ and $B_R$ are coprime in $\mathcal{O}_{LU}$. Then
\begin{equation} \label{x(P)}
x(P)=\frac{\theta_m(x(R))}{\psi_m^2(x(R))}=\frac{B_R^{2m^2}\theta_m \left( \frac{A_R}{B_R^2} \right) }{B_R^{2} \left( B_R^{2(m^2-1)}\psi_m^2 \left( \frac{A_R}{B_R^2} \right) \right)},
\end{equation}
where $B_R$ is coprime with the numerator. Let $R$ and $R'$ be two distinct points in $[m]^{-1}P$. Then $R'=R+T$ for some non-zero $T \in E[m]$.
From the addition formula it can be seen that $B_R$ and $B_{R'}$ are coprime in $\mathcal{O}_{LU}$. Any conjugate of a prime in the factorization of $B_R$ over $\mathcal{O}_{LU}$ divides the denominator of some element in the orbit $\{\sigma(x(R)) : \sigma \in \gal{(L/K)} \}$. Hence, using (\ref{x(P)}), the number of distinct prime ideals $\mathfrak{p} \notin S$ of $\mathcal{O}_K$ with $\ord_{\mathfrak{p}}(x(P))<0$ is at least equal to the number of factors of $\delta_m^P$ over $K$.

Finally, suppose that $f \in K(E)$ has a pole at $O$. We may assume that a Weierstrass equation for $E/K$ is of the the form $y^2$ equal to a monic cubic in $K[x]$. Now $f \in K(C)=K(x,y)$ and $[K(x,y):K(x)]=2$ give
\[
f(x,y)=\frac{\phi(x)+\psi(x)y}{\eta(x)},
\]
where $\phi(x), \psi(x), \eta(x) \in K[x]$. Now $\ord_O(\phi)=\ord_O(x^{\deg{\phi}})=-2 \deg{\phi}$. Similarly, $\ord_O(\psi)=-2 \deg{\psi}$ and $\ord_O(\eta)=-2 \deg{\eta}$. Since $O$ is a pole of $f$,
\[
\ord_O(f)=\ord_O(\phi+\psi y)-\ord_O(\eta)<0.
\] 
But $\ord_O(\phi+\psi y) \ge \min\{\ord_O(\phi),\ord_O(\psi)+\ord_O(y)$ 
and $\ord_O(y)=-3$, thus
\begin{equation} \label{pole}
2 \deg \eta < \max \{2 \deg{\phi}, 2\deg{\psi}+3  \}.
\end{equation}
Enlarge $S$ so that: 
\begin{itemize}
\item $\mathcal{O}_{KS}$ is a  a principal ideal domain;
\item the coefficients of the Weierstrass equation are $S$-integers;
\item $\phi(x), \psi(x), \eta(x) \in \mathcal{O}_{KS}[x]$ and their leading coefficients are $S$-units.
\end{itemize}
Assume that $x(P)y(P) \ne 0$ then $(x(P),y(P))=\left(A_P/B_P^2,C_P/B_P^3\right)$,
where $A_PC_P$ and $B_P$ are coprime in $\mathcal{O}_{KS}$. 
The condition (\ref{pole}) gives that $B_P$ divides the denominator and is coprime the numerator of $f(P)$ in $\mathcal{O}_{KS}$. Thus the result follows from the case $f=x$ above.   
\end{proof}  

\section{Proof of Theorem \ref{1.2}} \label{prime}

The condition that $\car K \nmid m$ ensures that multiplication by $m$ is separable and that $\#[m]^{-1}P=m^2$ (see 4.10 and 5.4 in Chapter III of \cite{MR2514094}). Hence, for $P \notin E[2]$ the splitting of field of $\delta_m^P$ is Galois over $K$. Note that $(\mathbb{Z}/m\mathbb{Z})^2$ is isomorphic to $E[m]$ and bijective with $[m]^{-1}P$. The actions of Galois on $E[m]$ and on $[m]^{-1}P$ are described by homomorphisms 
$\gal(\bar{K}/K) \to \gl_2(\mathbb{Z}/m\mathbb{Z})$ and $\gal(\bar{K}/K) \to \agl_2(\mathbb{Z}/m\mathbb{Z})$. Let $G_m$ and $\mathcal{G}_m$ be the images of these maps. Consider the homomorphism $\alpha_m: \mathcal{G}_m \to G_m$ given by $\alpha_m((A,v))=A$.	

\begin{lem} \label{m=2} 
Let $E$ be an elliptic curve defined over a field $K$ with $\car{K} \ne 2$ and let $P$ be a $K$-rational point on $E$. Then either 
\begin{enumerate}[i.]
\item $\delta_2^P$ is irreducible 
\item $P$ is a $2$-torsion point, or
\item $[2]^{-1}P$ has a $K$-rational point, or
\item $P$ is the image of a $K$-rational point under a $K$-rational $2$-isogeny. 
\end{enumerate} 
\end{lem}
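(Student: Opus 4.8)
The plan is to translate reducibility of $\delta_2^P$ into a statement about how $\gal(\bar{K}/K)$ permutes the four preimages of $P$, and then read off each alternative from the possible orbit structures. First I would dispose of the degenerate case: if $P \in E[2]$ we are in alternative (ii), so assume $P \notin E[2]$. Since $\car K \ne 2$, multiplication by $2$ is separable and $\#[2]^{-1}P = 4$. Moreover, for $R,R' \in [2]^{-1}P$ one has $x(R)=x(R')$ only if $R'=\pm R$, and $R'=-R$ would give $P = 2R' = -2R = -P$, forcing $P \in E[2]$, a contradiction. Hence the four preimages have distinct $x$-coordinates and $\delta_2^P$ is a separable polynomial of degree $4$ whose roots are exactly the $x(R)$ with $2R=P$. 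Because $R \mapsto x(R)$ is a Galois-equivariant bijection from $[2]^{-1}P$ onto the roots, the polynomial is irreducible over $K$, i.e. alternative (i), precisely when $\mathcal{G}_2$ acts transitively on $[2]^{-1}P$.

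So the substance lies in the reducible case, where $\mathcal{G}_2$ is intransitive and the orbit sizes partition $4$ into at least two parts. If some orbit has size $1$ there is a preimage fixed by all of $\gal(\bar{K}/K)$, hence a $K$-rational point of $[2]^{-1}P$, giving alternative (iii). The only remaining possibility is the partition $4 = 2+2$, and this is the case from which alternative (iv) must be extracted.

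For the $2+2$ case I would argue as follows. Choose a preimage $R_0$ and write its orbit as $\{R_0, R_1\}$ with $R_1 = R_0 + T$ for some $T \in E[2]\setminus\{O\}$; here $\sigma(R_0)-R_0$ always lies in $E[2]$ because $2(\sigma(R_0)-R_0)=\sigma(P)-P=O$. The point $T$ is $K$-rational: for each $\sigma$ the pair $\{R_0,R_1\}$ is stable, so $\sigma(T)=\sigma(R_1)-\sigma(R_0)$ equals either $R_1-R_0=T$ or $R_0-R_1=-T=T$. Thus $\langle T\rangle$ is a $K$-rational subgroup, and the quotient isogeny $\phi:E \to E':=E/\langle T\rangle$ together with its dual $\hat{\phi}:E' \to E$ are defined over $K$. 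Setting $Q:=\phi(R_0)$, rationality of $\phi$ together with $\sigma(R_0)\in\{R_0,R_0+T\}$ and $\phi(T)=O$ gives $\sigma(Q)=Q$ for all $\sigma$, so $Q \in E'(K)$; and $\hat{\phi}(Q)=\hat{\phi}\phi(R_0)=[2]R_0=P$. Hence $P$ is the image of the $K$-rational point $Q$ under the $K$-rational $2$-isogeny $\hat{\phi}$, which is alternative (iv).

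I expect the main obstacle to be exactly this $2+2$ case, and specifically the passage to alternative (iv) rather than the weaker conclusion that $E$ merely admits a $K$-rational $2$-isogeny. Two points require care: recognizing that the relevant $2$-torsion point $T$ is forced to be $K$-rational by stability of the size-$2$ orbit (and not through any group structure on the translation parts $\sigma(R_0)-R_0$, which need not form a subgroup), and realizing that one must use the dual isogeny $\hat{\phi}$, checking that $Q=\phi(R_0)$ descends to a $K$-rational point of $E'$, in order to realize $P$ itself as an image.
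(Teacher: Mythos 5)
Your proof is correct and follows essentially the same route as the paper: in the nontrivial case you extract a $K$-rational $2$-torsion point $T=\sigma(R)-R$ from a size-two orbit, form the $2$-isogeny with kernel $\langle T\rangle$, and use the dual applied to $Q=\phi(R)$ to realize $P$ as in case iv. The only difference is presentational: the paper phrases the case split via the degree $[K(x(R)):K]\le 2$ (using Lemma \ref{2.1}), whereas you phrase it via Galois orbit sizes on $[2]^{-1}P$; these are the same dichotomy.
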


\begin{proof} Let $2R=P$. Suppose that $P$ is not a $2$-torsion point. Using
Lemma \ref{2.1}, let $L=K(x(R),y(R))=K(x(R))$. If $\delta_2^P$ factorizes then we may choose $R$ so that $[L:K] \le 2$. If $[L:K]=1$ then we are in case iii. 
If $[L:K]=2$ then choose 
$\sigma \in \gal(L/K)$ to be non-trivial. Then $T=\sigma(R)-R$ is a $2$-torsion point since $\sigma(2R)-2R=\mathcal{O}$. Also $T \in E(K)$ since 
$\sigma(T)=-T$. Using this torsion point, we can construct an elliptic curve $E'/K$ and a $2$-isogeny $\phi: E \to E'$ 
with $\ker \phi=\{ \mathcal{O},T \}$ (see 8.2.1 of \cite{MR2312337}). Moreover, both $\phi$ and its dual $\hat{\phi}: E' \to E$ 
are defined over $K$. Put $\phi(R)=Q$. It follows that $\sigma(Q)=\phi(\sigma(R))=\phi(R+T)=\phi(R)$. Hence $Q \in E'(K)$ 
and $\hat{\phi}(Q)=P$.   \end{proof}

Note that, for $l=2$, Lemma \ref{m=2} is stronger than Theorem \ref{1.2}. 

\begin{proof}[Proof of Theorem \ref{1.2}]
If $P \in E[2]$ then we are in case ii or iii. So assume that $P \notin E[2]$. If $\# \ker \alpha_l >1$ then there exists a non zero $l$-torsion point $T$ and $\sigma \in \gal(\bar{K}/K)$ with $\sigma(R)=R+T$ for all $R \in [l]^{-1}P$. Hence $\tau \sigma \tau^{-1} (R)=R+\tau(T)$ for any $\tau \in \gal(\bar{K}/K)$. If $\tau(T) \in \left< T \right>$ for all $\tau \in \gal(\bar{K}/K)$ then we are in case ii (see 4.12 and 4.13 in Chapter III of \cite{MR2514094}). Otherwise, Galois acts transitively on $[l]^{-1}P$ and we are in case i.

Thus, it remains to consider the case where $\alpha_l: \mathcal{G}_l \to G_l$ is an isomorphism and, by Lemma \ref{m=2}, $l>2$. So $\alpha_l$ has an inverse $A \to (v \to Av+b_A)$ and the map $\beta_l: G_l \to E[l]$ given by $\beta_l(A)=b_A$ is a crossed homomorphism because
$\beta_l(AB)=Ab_B+b_A$. The map $\beta_l$ is said to be principal if for some fixed $v \in (\mathbb{Z}/l\mathbb{Z})^2$, $\beta_l(A)=Av-v$ for all $A \in G_l$. The group $H^1(G_l,E[l])$ is the quotient of the group of crossed homomorphisms $G_l \to E[l]$ and the group of  principal ones. If $l$ does not divide $\# G_l$ then the orders of $G_l$ and $E[l]$ are coprime, so it follows that $H^1(G_l,E[l])=0$.
So assume that $l \mid \# G_l$ and apply Proposition 15 of \cite{MR0387283}. Either $G_l$ is contained in a Borel subgroup and so we are in case ii since the span of a point of order $l$ is fixed by Galois, or $G_l$ contains $H_l=\SL_2(\mathbb{Z}/l\mathbb{Z})$. For the second possibility construct an inflation-restriction sequence as in the proof of Lemma 4 in \cite{MR1694286}.  Note that $H_l$ is normal since it is the kernel of the determinant on $G_l$. There is an exact sequence
\[ 
0 \to H^1(G_l/H_l,E[l]^{H_l}) \to H^1(G_l, E[l]) \to H^1(H_l, E[l]). 
\] 
For $l>2$ the first cohomology group is trivial since $E[l]^{H_l}$ is trivial. By Lemma~3 in \cite{MR1694286}, the third cohomology group is also trivial. Hence $H^1(G_1, E[l])=0$ and so $\beta_l$ must be principal. 
But then $-v=-Av+\beta_l(A)$ for all $A \in G_l$ gives a fixed point for the action on $[l]^{-1}P$ so we are in case iii.     
\end{proof}  

\section{Multiplication by a composite}

Let $\alpha_m$ be as in Section \ref{prime}. A result for all composite $m$ is  

\begin{thm} \label{nec}
Let $m>1$ be a composite integer, $E$ an elliptic curve defined over a field $K$ with $\car K \nmid m$  and $P$ a $K$-rational point on $E$. Then either  
\begin{enumerate}[i.]
\item $\delta_m^P$ is irreducible, or 
\item $\delta_d^P$ factorizes, where $d>1$ is a proper divisor of $m$, or
\item $E$ emits a $K$-rational $l$-isogeny for some prime $l \mid m$, or
\item $\alpha_m$ is an isomorphism.
\end{enumerate}  
\end{thm}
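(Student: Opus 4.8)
The plan is to translate the statement entirely into the language of the Galois action on $[m]^{-1}P$ and then run a dichotomy on $\ker\alpha_m$. First I would record the basic dictionary. Since $\car K \nmid m$, the set $[m]^{-1}P$ has exactly $m^2$ points, and for $P \notin E[2]$ the map $R \mapsto x(R)$ is injective on it (if $x(R)=x(R')$ then $R'=\pm R$, and $R'=-R$ is impossible since $m(-R)=-P\neq P$). Hence the roots of $\delta_m^P$ are the distinct $x$-coordinates of the points of $[m]^{-1}P$, and the irreducible factors of $\delta_m^P$ over $K$ correspond bijectively to the orbits of $\gal(\bar K/K)$ on $[m]^{-1}P$; the same holds for each divisor $d\mid m$ with $\delta_d^P$ and $[d]^{-1}P$. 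Thus case i becomes ``$\gal$ acts transitively on $[m]^{-1}P$'' and case ii becomes ``$\gal$ fails to act transitively on $[d]^{-1}P$ for some proper divisor $d>1$''. Before proceeding I would dispose of $P \in E[2]$: if $P=O$ then any $\sigma$ fixing $E[m]$ pointwise fixes $[m]^{-1}P=E[m]$, so $\ker\alpha_m=0$ and we are in case iv; if $P \in E[2]\setminus\{O\}$ and $2\mid m$ then $\{O,P\}$ is a $K$-rational subgroup of order $2$, giving case iii; and if $2\nmid m$ then every divisor $d\mid m$ is odd, so $dP=P$ and $x(P)$ is a $K$-rational root of $\delta_d^P$, putting us in case ii.

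So assume $P \notin E[2]$. The heart of the argument is the subgroup $V=\ker\alpha_m \subseteq E[m]$, consisting of those $T$ for which some $\sigma$ acts as the translation $R \mapsto R+T$ on $[m]^{-1}P$ while fixing $E[m]$. If $V=0$ then $\alpha_m$ is an isomorphism and we are in case iv, so suppose $V \neq 0$. I would first check that $V$ is Galois-stable: the computation $\tau\sigma\tau^{-1}(R)=R+\tau(T)$ for all $\tau \in \gal(\bar K/K)$ shows that $T \in V$ forces $\tau(T)\in V$. Choosing an element of $V$ of prime order produces a prime $l\mid m$ with $V[l]=V\cap E[l]\neq 0$, a nonzero Galois-stable $\mathbb{F}_l$-subspace of $E[l]$. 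If $\dim_{\mathbb{F}_l}V[l]=1$ then $V[l]$ is a Galois-stable subgroup of order $l$, and since $\car K \nmid l$ the quotient isogeny $E \to E/V[l]$ is a $K$-rational $l$-isogeny, which is case iii.

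It remains to treat $\dim_{\mathbb{F}_l}V[l]=2$, i.e. $E[l]\subseteq V=\ker\alpha_m$. Here I would set $d=m/l$, which is a proper divisor with $1<d<m$ because $m$ is composite, and consider the Galois-equivariant surjection $\pi:[m]^{-1}P \to [d]^{-1}P$, $R \mapsto lR$, whose fibres are precisely the cosets $R+E[l]$. Because every translation by an element of $E[l]$ is realized by a Galois element, $\gal$ acts transitively on each fibre of $\pi$, and a short argument then shows that $R,R'$ lie in one Galois orbit on $[m]^{-1}P$ if and only if $\pi(R),\pi(R')$ do on $[d]^{-1}P$: equivariance gives one direction, and for the converse one first moves $R$ into the fibre of $R'$ by some $\tau$ and then corrects within that fibre by a translation from $E[l]\subseteq \ker\alpha_m$. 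Consequently $\delta_m^P$ and $\delta_d^P$ have the same number of irreducible factors; if that number exceeds $1$ we are in case ii, and otherwise $\delta_m^P$ is irreducible and we are in case i.

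The one genuinely delicate step is this last reduction: one must argue that when $E[l]$ is contained in $\ker\alpha_m$ the orbit structure on $[m]^{-1}P$ is governed entirely by that on $[d]^{-1}P$, which is exactly the orbit correspondence above. Everything else is either the dictionary between factorizations and orbits or the elementary linear algebra of a Galois-stable subspace of $E[l]$ (dimension $1$ giving an isogeny, dimension $2$ giving the full $l$-torsion). The boundary cases $P\in E[2]$ and the verification that $\ker\alpha_m$ is Galois-stable are routine but should be stated explicitly, since the whole scheme rests on identifying $\ker\alpha_m$ with an honest Galois-stable subgroup of $E[m]$.
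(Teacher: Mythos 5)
Your proof is correct and follows essentially the same route as the paper's: a dichotomy on $\ker\alpha_m$, with a Galois-stable line in $E[l]$ yielding case iii and $E[l]\subseteq\ker\alpha_m$ forcing the orbit structure on $[m]^{-1}P$ to descend to $[m/l]^{-1}P$ (case i or ii). Your treatment is in fact slightly more complete, since you dispose of $P\in E[2]$ explicitly, a case the paper's proof silently assumes away.
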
 

\begin{proof}
If $\# \ker \alpha_m >1$ then there exists a non-zero $m$-torsion point $T$ and $\sigma \in \gal(\bar{K}/K)$ with $\sigma(R)=R+T$ for all $R \in [m]^{-1}P$.
If $T$ has order $d_1$ then write $d_1=ld_2$ where $l$ is prime. Now 
$\sigma^{d_2}R=R+d_2T$ for all $R \in [m]^{-1}P$.  
 Hence $\tau \sigma^{d_2} \tau^{-1} (R)=R+\tau(d_2T)$ for any $\tau \in \gal(\bar{K}/K)$.  
Assume that $\tau(d_2T)$ is not a multiple of $d_2T$ for some $\tau \in \gal(\bar{K}/K)$; otherwise we are in case iii. 
Then we can always find a Galois element which will take 
$R$ to $R+T_1$, where $T_1$ is any $l$-torsion point. Assume that $P \notin E[2]$ and $\delta_m^P$ factorizes over $K$. Let $R_1, R_2 \in [m]^{-1}P$ correspond to roots of two different factors. By assumption for any $T_1 \in E[l]$,  $R_2+T_1$ corresponds to a root of the same polynomial. Thus, $\rho (R_1)-R_2$ is not a $l$-torsion point for any $\rho \in \gal(\bar{K}/K)$. So $\rho(lR_1) \ne lR_2$ for any $\rho \in \gal(\bar{K}/K)$. Since 
$lR_1, lR_2 \in [m/l]^{-1}P$, Galois does not act transitively on $[m/l]^{-1}P$ and so we are in case ii.
\end{proof}

Let $D_m$ be the square-free polynomial whose roots are the $x$-coordinates of the points of order $m$. Then the action of Galois on $E[m]$ is given by the Galois group of $D_m$. Note that, for $m=4$, all of the cases in Theorem \ref{nec} are necessary. For example, taking the curve ``117a4" with $P=(8,36)$ we see that iv is false because the Galois groups of $\delta_4^P$ and $D_4$ have different orders; moreover, only iii is true. For the curve ``55696ba1" and the generator Cremona gives, by checking that the curve has a trivial isogeny class, we see that only iv is true. When $m$ has two coprime proper divisors we have    

\begin{thm}
Suppose that $m>1$ is composite and $m=d_1d_2$ where $d_1, d_2$ are coprime proper divisors. If $\delta_m^P$ factorizes then either $\delta_{d_1}^P$ or $\delta_{d_2}^P$ factorizes. 
\end{thm}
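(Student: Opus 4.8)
The plan is to prove the contrapositive: if neither $\delta_{d_1}^P$ nor $\delta_{d_2}^P$ factorizes over $K$, so that both are irreducible, then $\delta_m^P$ is irreducible as well. I keep in force the running hypothesis $\car K \nmid m$, so that for every divisor $d$ of $m$ the polynomial $\delta_d^P$ is monic of degree $d^2$ and its roots are precisely the $x$-coordinates of the points in $[d]^{-1}P$. A preliminary remark settles the case $P \in E[2]$: since $d_1$ and $d_2$ are coprime at least one of them, say $d_1$, is odd, and then $d_1 P = P$, so $x(P) \in K$ is a root of $\delta_{d_1}^P$; as $\deg \delta_{d_1}^P = d_1^2 > 1$ this already factorizes $\delta_{d_1}^P$ and there is nothing to prove. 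Hence I may assume $P \notin E[2]$.

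Next I would express, for each $R \in [m]^{-1}P$, the field $K(x(R))$ in terms of the fields generated by $x(d_1 R)$ and $x(d_2 R)$. First, $d_1 R \in [d_2]^{-1}P$ and $d_2 R \in [d_1]^{-1}P$, so $x(d_2 R)$ is a root of $\delta_{d_1}^P$ and $x(d_1 R)$ is a root of $\delta_{d_2}^P$. Second, because multiplication by $d$ acts on $x$-coordinates through the $K$-rational function $[d]x = \theta_d/\psi_d^2$, both $x(d_1 R)$ and $x(d_2 R)$ lie in $K(x(R))$. Assuming both $\delta_{d_1}^P$ and $\delta_{d_2}^P$ irreducible, every root of $\delta_{d_1}^P$ has degree exactly $d_1^2$ over $K$ and every root of $\delta_{d_2}^P$ has degree exactly $d_2^2$, so $[K(x(d_2 R)):K]=d_1^2$ and $[K(x(d_1 R)):K]=d_2^2$. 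Since $\gcd(d_1,d_2)=1$ forces $\gcd(d_1^2,d_2^2)=1$, the compositum $K(x(d_1 R), x(d_2 R))$ has degree $d_1^2 d_2^2 = m^2$ over $K$. This compositum is contained in $K(x(R))$, whence $[K(x(R)):K] \ge m^2$; on the other hand $x(R)$ is a root of $\delta_m^P$, so $[K(x(R)):K] \le \deg \delta_m^P = m^2$. Therefore $[K(x(R)):K]=m^2$, the minimal polynomial of $x(R)$ coincides with the monic polynomial $\delta_m^P$, and $\delta_m^P$ is irreducible.

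The main obstacle, and the reason I would route the argument through field degrees rather than through the Galois action, is the following. It is tempting to identify $[m]^{-1}P$ with $[d_1]^{-1}P \times [d_2]^{-1}P$ (the map $R \mapsto (d_2 R, d_1 R)$ is a $\gal(\bar K/K)$-equivariant bijection, injective because $E[d_1] \cap E[d_2] = \{O\}$ and surjective by a count of cardinalities) and then to read off factorization from the (in)transitivity of Galois on these sets. The difficulty is that transitivity on each factor does not formally imply transitivity on the product: the image of $\gal(\bar K/K)$ in $\mathcal{G}_{d_1} \times \mathcal{G}_{d_2}$ is in general only a subdirect product, and entanglement between the division fields could in principle obstruct the passage to the full product. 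Working with degrees removes this difficulty entirely, since irreducibility pins the degree of each root of $\delta_{d_i}^P$ to the full value $d_i^2$, and the multiplicativity of degrees in a compositum of extensions of coprime degree needs no linear-disjointness hypothesis. The one point to handle with care is therefore to invoke irreducibility — not merely non-transitivity — so that the degrees are exactly $d_1^2$ and $d_2^2$ and the coprime-degree multiplicativity applies.
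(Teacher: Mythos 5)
Your proof is correct, and it takes a genuinely different route from the paper's. The paper argues through the Galois action: using $xd_1+yd_2=1$ it embeds $\mathcal{G}_m$ into $\mathcal{G}_{d_1}\times\mathcal{G}_{d_2}$, asserts that this injection is an isomorphism, and then manufactures an element $(\sigma,\id)$ moving $R$ to $R+n_2T$ to show that intransitivity on $[m]^{-1}P$ forces intransitivity on $[d_2]^{-1}P$. The surjectivity of that embedding is exactly the entanglement issue you flag in your final paragraph: injectivity plus surjectivity onto each factor only makes the image a subdirect product, and the paper gives no argument that it is the full product. Your degree-counting argument sidesteps this entirely: irreducibility of $\delta_{d_i}^P$ pins $[K(x(d_jR)):K]$ to exactly $d_i^2$, coprimality of $d_1^2$ and $d_2^2$ forces the compositum inside $K(x(R))$ to have degree $m^2=\deg\delta_m^P$, and hence $\delta_m^P$ is the minimal polynomial of $x(R)$. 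What your approach buys is robustness (no linear-disjointness of division fields is needed) and a sharper conclusion (irreducibility of both $\delta_{d_1}^P$ and $\delta_{d_2}^P$ implies irreducibility of $\delta_m^P$, not merely the stated disjunction); what the paper's approach would buy, if the surjectivity were justified, is finer information about which orbits of $[m]^{-1}P$ map to which orbits of $[d_i]^{-1}P$. Two small points to tidy: your reduction for $P\in E[2]$ should separate the case $P=O$, where $\delta_{d_1}^O=\psi_{d_1}^2$ is a square of a positive-degree polynomial in $K[x]$ for odd $d_1>1$ and so still factorizes; and when you write $x(d_iR)\in K(x(R))$ via $[d_i]x=\theta_{d_i}/\psi_{d_i}^2$ you should note that $\psi_{d_i}^2(x(R))\neq 0$ because $d_iR\neq O$, which holds since $P\neq O$.
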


\begin{proof}
There exists $x,y \in \mathbb{Z}$ such that $xd_1+yd_2=1$. Consider the homomorphism $\mathcal{G}_m \to \mathcal{G}_{d_1} \times \mathcal{G}_{d_2}$ given by $\rho \to (\rho, \rho)$. If $\rho$ is in the kernel of this map then $\rho(d_2R)=d_2R$ and $\rho(d_1R)=d_1R$ for all $R \in [m]^{-1}P$. But then $x\rho(d_1R)+y\rho(d_2R)=\rho(R)=R$ for all $R \in [m]^{-1}P$. So  $\mathcal{G}_m \cong \mathcal{G}_{d_1} \times \mathcal{G}_{d_2}$. Assume that $P \notin E[2]$ and $\delta_{d_1}^P$ is irreducible. Then for any $R \in [m]^{-1}P$ and $T \in E[d_1]$ there exists $\sigma \in \mathcal{G}_{d_1}$ with $\sigma(d_2R)=d_2R+T$. Define $(\sigma, \id)$ by  $(\sigma, \id)(R)=n_2\sigma(d_2R)+n_1(d_1R)$. Since $\mathcal{G}_m \cong \mathcal{G}_{d_1} \times \mathcal{G}_{d_2}$,  $(\sigma, \id) \in \mathcal{G}_m$.  For any $R \in [m]^{-1}P$, $(\sigma,\id)(R)=R+n_2T$. So, since $d_1$ and 
$n_2$ are coprime, $R$ and $R+T$ must correspond to roots of the same polynomial. Suppose that $\delta_m^P$ factorizes and let $R_1, R_2 \in [m]^{-1}P$ correspond to roots of two different factors. Then $\rho(R_1)-R_2 \notin E[d_1]$ or $\rho(d_1R_1) \ne \rho(d_1R_2)$  for all $\rho \in \gal(\bar{K}/K)$. Since $d_1R_1, d_1R_2 \in [d_2]^{-1}P$ it follows that $\delta_{d_2}^P$ factorizes. 
\end{proof}

Hence the case where $m$ is a composite prime power remains. Although no further results could be proven it is perhaps worth noting that, in all of Cremona's data, an example where i and ii are false in Theorem \ref{nec} could not be found when $4 < m \le 25$.

\bibliographystyle{amsplain}
\bibliography{myrefs}

\end{document}